\newtheorem{prethm}{{\bf Theorem}}
\newenvironment{thm}{\begin{prethm}{\hspace{-0.5
               em}{\bf.}}}{\end{prethm}}
\newtheorem{prepro}[prethm]{{\bf Theorem}}
\newtheorem{preprop}[prethm]{{\bf Proposition}}
\newtheorem{precor}[prethm]{{\bf Corollary}}
\newtheorem{preconj}[prethm]{{\bf Conjecture}}
\newtheorem{preremark}[prethm]{{\bf Remark}}
\newtheorem{preexample}[prethm]{{\bf Example}}
\newtheorem{prelem}[prethm]{{\bf Lemma}}
\newtheorem{prelam}{{\bf Lemma}}
\newtheorem{preproof}{{\bf Proof.}}
\newenvironment{proof}[1]{\begin{preproof}{\rm
               #1}\hfill{$\Box$}}{\end{preproof}}
\title{\bf \large A Note on Co-Maximal Ideal Graph of Commutative Rings
\thanks
{{\it Key Words}:  Co-maximal ideal graph, Star graph.}
\thanks {2010{ \it Mathematics Subject Classification}: 05C10, 05C25,  05C69, 16D25.
 }}
\author{{\normalsize  {\sc S. Akbari${}^{\mathsf{a,c}}$}, {\sc B.
Miraftab${}^{\mathsf{a}}$}, {\sc R. Nikandish${}^{\mathsf{b,c}}$}} \\
 {\footnotesize{${}^{\mathsf{a}}$\it Department of Mathematical
 Sciences, Sharif
University of Technology, Tehran, Iran}}\\
{\footnotesize{${}^{\mathsf{b}}$\it Department of Mathematics, Jundi-Shapur University of Technology,  Dezful, Iran}}\\
{\footnotesize{\rm P.O. Box 64615-334}}\\
{\footnotesize{${}^{\mathsf{c}}$\it School of Mathematics, Institute
for Research in Fundamental Sciences, \rm{(IPM)}}}\\
{\footnotesize{\rm P.O. Box 19395-5746}}\\
{\footnotesize{}}\\
{\footnotesize{$\mathsf{s\_akbari@sharif.edu}$\quad\quad
$\mathsf{babak.math@gmail.com}$\quad\quad$\mathsf{r.nikandish@ipm.ir}$}}}
\date{}
\begin{document}
\maketitle
\begin{abstract}
{\small Let $R$ be a commutative ring with unity. The co-maximal ideal graph of $R$, denoted by $\Gamma(R)$, is a
graph whose vertices are the proper ideals of $R$ which are not contained in the Jacobson radical of
$R$, and two vertices $I_1$ and $I_2$ are adjacent if and only if $I_1 + I_2 = R$.
We classify all commutative rings whose co-maximal ideal graphs are planar.
In 2012 the following question was posed: If $\Gamma(R)$ is an infinite star graph,
can $R$ be isomorphic to the direct product of a field and a local ring? In this paper, we give an affirmative answer to this question. }
\end{abstract}

\vspace{9mm} \noindent{\bf\large 1. Introduction}\\\\
{\indent When one assigns a graph to
an algebraic structure numerous interesting algebraic problems
arise from the translation of some graph-theoretic parameters
such as clique number, chromatic number, independence number and
so on. There are a lot of papers
which apply combinatorial methods to obtain algebraic results, for instance see \cite {ak2}, \cite {akn1}, \cite {akn2}, \cite {sharm} and \cite {ye}.\\

\noindent Let $G$ be a graph with the vertex set $V(G)$.
%If $u$ and $v$ are two adjacent vertices of
%$G$, then we write $u-\hspace{-.2cm}-v$. The \textit{degree} of a vertex $v$ in a graph $G$ is the
%number of edges incident with $v$. The degree of a vertex $v$ is
%denoted by $deg(v)$. %If $deg(v)=1$, then $v$ is called \textit{pendant}.
%The minimum degree and the maximum degree of vertices in $G$ are denoted by $\delta(G)$ and $\Delta(G)$, respectively. Recall that a graph $G$ is
%\textit{connected} if there is a path between every two distinct
%vertices.
%For every pair of distinct vertices $x$ and $y$ of $G$, let $d(x,y)$
%be the length of the shortest path from $x$ to $y$ and if there
%is no such a path we define $d(x,y)=\infty$. The
%\textit{diameter} of $G$, $diam(G)$, is the supremum of the set $\{d(x,y)
%: x~{\rm and}~y~{\rm are~distinct~vertices~of}~G\}$. Any connected graph without cycles is called a \textit{tree}.
%The \textit{complete
%graph} of order $n$, denoted by $K_n$, is a graph with $n$
%vertices in which any two distinct vertices are adjacent.
A bipartite graph with part sizes $m$ and $n$ is denoted by $K_{m,n}$. If the size of one of the parts is $1$, then the graph is said to
be a \textit{star graph}. A \textit{clique} of $G$ is a
complete subgraph of $G$ and the number of vertices in a
largest clique of $G$, denoted by $\omega(G)$, is called the
\textit{clique number} of $G$.
%For a graph $G$, let $\chi(G)$
%denote the \textit{chromatic number} of $G$, i.e., the minimum
%number of colors which can be assigned to the vertices of $G$ in
%such a way that every two adjacent vertices have different
%colors. A graph $G$ is called \textit{perfect} if $\omega(S)=\chi(S)$, for every subgraph $S$ of $G$.
An \textit{independent set} of $G$ is a subset of the
vertices of $G$ such that no two vertices in the subset represent
an edge of $G$. The \textit{independence number} of $G$, denoted
by $\alpha(G)$, is the cardinality of the largest independent set. A graph is said to be \textit{ planar}, if it can be drawn in
the plane so that its edges intersect only at their ends.\\
\noindent Throughout this paper  $R$ is a commutative ring with unity.  The set of maximal ideals of $R$ and the Jacobson radical of $R$ are denoted by ${\rm Max}(R)$ and ${\rm J}(R)$, respectively. The ring $R$ is called \textit{local} if $|{\rm Max}(R)|=1$. The ring $R$ is said to be  \textit{uniserial} if  ideals of $R$ are totally ordered by inclusion.\\
\noindent The \textit{co-maximal ideal graph} of $R$, denoted by $\Gamma(R)$, is a
graph whose vertices are the proper ideals of $R$ which are not contained in the Jacobson radical of
$R$, and two vertices $I_1$ and $I_2$ are adjacent if and only if $I_1 + I_2 = R$. This graph was first introduced and studied in \cite{ye}. In 2012, Ye and Wu in \cite[Question 4.12]{ye} asked the following question: If $\Gamma(R)$ is an infinite star graph, can $R$ be isomorphic to the direct
product of a field and a local ring? In this paper, we give an affirmative answer to this question. Indeed, we show that there exists a vertex of  $\Gamma(R)$ which is adjacent to all other vertices if and only if $R$ is isomorphic to the direct
product of a local ring and a field. Also we characterize all commutative rings whose co-maximal ideal graphs are planar.
}

%%%%%%%%%%%%%%%%%%%%%%%%%%%%%%%%%%%%%%%%%%%%%%%%%%%%%%%%%%%%%%%%%%%%%%%%%%%%%%%%%%%%%%%%%%%%%%%%%%%%%%%%
\vspace{9mm} \noindent{\bf\large 2. Results}\\

\noindent In this section, we classify all rings whose co-maximal ideal graphs have a vertex which is adjacent to all other vertices. We start with the following theorem.
\begin{thm}\label{1}
Let $R$ be a ring. Then there exists a vertex of  $\Gamma(R)$ which is adjacent to all other vertices if and only if $R$ is isomorphic to the direct
product of a local ring and a field.
\end{thm}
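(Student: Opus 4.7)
My plan is to handle the two directions separately. For $(\Leftarrow)$, with $R\cong L\times F$ where $L$ is local with maximal ideal $\mathfrak{m}$ and $F$ is a field, I would enumerate the proper ideals as $I\times 0$ and $I\times F$ for ideals $I$ of $L$, observe that $\mathrm{J}(R)=\mathfrak{m}\times 0$, identify the vertices of $\Gamma(R)$ as $L\times 0$ together with the $I\times F$ for $I\subsetneq L$, and verify directly that $L\times 0$ is comaximal with every other vertex.

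For $(\Rightarrow)$, let $I_0$ be the universal vertex. The ring $R$ must be non-local (otherwise $\Gamma(R)$ has no vertices at all), and $I_0$ itself must be a maximal ideal, since if $I_0\subsetneq M$ with $M$ maximal then $M$ is a vertex distinct from $I_0$ with $I_0+M=M\neq R$. Set $M_1:=I_0$, and extract the structural lemma that every proper ideal $J\subsetneq M_1$ lies in $\mathrm{J}(R)$ --- otherwise $J$ is a vertex not comaximal with $M_1$. From this lemma I would read off two facts: $M_1$ is principal, generated by any $x\in M_1\setminus \mathrm{J}(R)$; and $R$ has exactly two maximal ideals, since a third maximal ideal $M_3$ would make $M_1\cap M_2$ a proper sub-ideal of $M_1$ not contained in $M_3$ by primality, contradicting the lemma.

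The crux is then to turn $M_1=(x)$ into an ideal generated by an idempotent. Writing $1=x+y$ with $y\in M_2$ gives $x-x^2=xy\in \mathrm{J}(R)$, so $x$ is idempotent modulo $\mathrm{J}(R)$; the task is to lift this. The route I plan uses the lemma once more: if $M_1^2\subsetneq M_1$ then $x^2\in \mathrm{J}(R)\subseteq M_2$, so $x\in M_2$ by primality, forcing $M_1\subseteq \mathrm{J}(R)$ --- impossible. Hence $M_1^2=M_1$, so $x=sx^2$ for some $s\in R$, and $e:=sx$ is an idempotent generator of $M_1$. The Peirce decomposition $R\cong eR\times (1-e)R$ then finishes the proof: $(1-e)R\cong R/M_1$ is a field, while $eR=M_1$ has all its proper ideals contained in $\mathrm{J}(R)$ by the lemma, making $eR$ local. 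The main obstacle throughout is this idempotent-lifting step, which is where the primality of $M_2$ is essential.
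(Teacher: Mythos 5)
Your proof is correct and takes essentially the same route as the paper: the universal vertex is a maximal ideal generated by an idempotent (obtained from $M_1=M_1^2$, i.e.\ $x=sx^2$), and the Peirce decomposition $R\cong eR\times(1-e)R$ produces the field and the local factor. The paper reaches $Ra=Ra^2$ by observing that $Ra^2$ is itself a vertex, and checks the field/local conclusions directly from the universal-vertex property inside the product rather than via your structural lemma, but these differences are cosmetic.
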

\begin{proof}
{One side is clear. For the other side, let $I$ be a vertex adjacent to all other vertices and $a\in I\setminus {\rm J}(R)$. Since $I$ is adjacent to all other vertices, we deduce that $I=Ra$ and $I$ is a maximal ideal of $R$. Also, $Ra^2$ is a vertex of $\Gamma(R)$ and so $Ra=Ra^2$. Thus $a=ta^2$, for some $t\in R$. Clearly, $1\neq 1-ta$ is  a non-zero idempotent. By \cite[Proposition 5.10]{ander}, $R\cong R_1\times R_2$, for some rings $R_1$ and $R_2$. We show that at least one of the rings  $R_1$ and $R_2$ is a field. With no loss of generality, we may assume that $I=R_1\times \mathfrak{m}$, where $\mathfrak{m}$ is a maximal ideal of $R_2$. Obviously, if $\mathfrak{m}\neq 0$, then $I$ is not adjacent to $R_1\times 0$, a contradiction. Thus $R_2$ is a field. Now, we prove that $R_1$ is a local ring. By contrary, assume that $R_1$ is not a local ring. Thus there exists an ideal of $R$, say $J=\mathfrak{m}_1\times 0$, where $\mathfrak{m}_1$ is a maximal ideal of $R_1$, and $J$ is a vertex of $\Gamma(R)$. But $I$ and $J$ are not adjacent, a contradiction and the proof is complete.
}
\end{proof}
\noindent In the sequel of this paper, we provide some conditions under which $\Gamma(R)$ is a finite graph.
\begin{thm}\label{independence}
If  $\alpha(\Gamma(R))<\infty$, then $\Gamma(R)$ is a finite graph.
\end{thm}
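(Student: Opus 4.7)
The plan is to prove the contrapositive: if $V(\Gamma(R))$ is infinite, then $\alpha(\Gamma(R)) = \infty$. I would split the argument into two cases depending on whether $|\mathrm{Max}(R)|$ is finite or infinite, and in each case exhibit an infinite independent set directly.

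The easy case is when $|\mathrm{Max}(R)| < \infty$. Every proper ideal of $R$ lies in some maximal ideal, so by the pigeonhole principle some fixed $\mathfrak{m} \in \mathrm{Max}(R)$ contains infinitely many vertices of $\Gamma(R)$. Any two such ideals $I,I' \subseteq \mathfrak{m}$ satisfy $I + I' \subseteq \mathfrak{m} \neq R$, so they are pairwise non-adjacent, yielding the desired infinite independent set.

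The substantive case is $|\mathrm{Max}(R)| = \infty$. Fix pairwise distinct maximal ideals $\mathfrak{m}_1,\mathfrak{m}_2,\dots$ and set $I_n := \mathfrak{m}_1 \cap \mathfrak{m}_n$ for $n \geq 2$. Since every $I_n$ sits inside $\mathfrak{m}_1$, the family $\{I_n\}_{n\geq 2}$ is automatically independent in $\Gamma(R)$. Two things still need checking. First, the $I_n$ are distinct: if $\mathfrak{m}_1 \cap \mathfrak{m}_n = \mathfrak{m}_1 \cap \mathfrak{m}_m$, then $\mathfrak{m}_1 \mathfrak{m}_n \subseteq \mathfrak{m}_m$, and primality of $\mathfrak{m}_m$ forces $\mathfrak{m}_1 = \mathfrak{m}_m$ or $\mathfrak{m}_n = \mathfrak{m}_m$, both contradicting the choice of distinct maximals when $n \neq m$. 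Second, each $I_n$ is a vertex, i.e.\ $I_n \not\subseteq J(R)$: pick any $\mathfrak{m}_k$ different from $\mathfrak{m}_1$ and $\mathfrak{m}_n$ (available since $\mathrm{Max}(R)$ is infinite); then comaximality gives
\[
R \;=\; (\mathfrak{m}_1+\mathfrak{m}_k)(\mathfrak{m}_n+\mathfrak{m}_k) \;\subseteq\; (\mathfrak{m}_1\cap\mathfrak{m}_n)+\mathfrak{m}_k \;=\; I_n+\mathfrak{m}_k,
\]
so $I_n \not\subseteq \mathfrak{m}_k$ and therefore $I_n \not\subseteq J(R)$. This produces an infinite independent set and completes the contradiction.

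The main obstacle is locating a construction that simultaneously guarantees independence, distinctness, and membership in $V(\Gamma(R))$; the choice $I_n = \mathfrak{m}_1 \cap \mathfrak{m}_n$ is natural once one realises that anchoring every member of the family inside a common maximal ideal gives independence for free, while primality of the maximal ideals and the presence of a third auxiliary $\mathfrak{m}_k$ take care of the other two requirements. Everything else is routine.
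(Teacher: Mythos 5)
Your proof is correct, and it takes a genuinely different, more elementary route than the paper's. The paper argues forward: from $\alpha(\Gamma(R))<\infty$ it deduces that $R/\mathrm{J}(R)$ is Artinian, hence that $|\mathrm{Max}(R)|<\infty$, then invokes a result of Ye and Wu to get $\omega(\Gamma(R))<\infty$, and finally applies Ramsey's theorem to conclude that a graph with both clique number and independence number finite must have finitely many vertices. You instead prove the contrapositive by exhibiting explicit infinite independent sets: when $\mathrm{Max}(R)$ is infinite, the family $I_n=\mathfrak{m}_1\cap\mathfrak{m}_n$ anchored inside $\mathfrak{m}_1$ is automatically independent, with distinctness secured by primality of the $\mathfrak{m}_m$ and the vertex condition by the comaximality computation $R=(\mathfrak{m}_1+\mathfrak{m}_k)(\mathfrak{m}_n+\mathfrak{m}_k)\subseteq I_n+\mathfrak{m}_k$, all of which checks out; when $\mathrm{Max}(R)$ is finite, pigeonhole over the maximal ideals does the rest. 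Your approach buys self-containedness (no Artinian ring theory, no Ramsey numbers) and in fact something quantitatively sharper: since the vertices contained in a fixed maximal ideal form an independent set and these sets cover $V(\Gamma(R))$, your finite-$\mathrm{Max}$ case gives the explicit bound $|V(\Gamma(R))|\le |\mathrm{Max}(R)|\cdot\alpha(\Gamma(R))$, whereas the Ramsey argument only yields a (much larger) Ramsey-number bound. The paper's route has the advantage of leaning on standard cited machinery and of making visible the intermediate structural facts ($R/\mathrm{J}(R)$ Artinian, $\omega(\Gamma(R))$ finite), but as a proof of this particular statement yours is cleaner and complete.
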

\begin{proof}
{Since $\alpha(\Gamma(R))<\infty$, we deduce that $\alpha(\Gamma(\frac{R}{{\rm J}(R)}))<\infty$. Thus $\frac{R}{{\rm J}(R)}$ is an Artinian ring and so by \cite[Theorem 8.7]{ati},  $\frac{R}{{\rm J}(R)}$ has finitely many maximal ideals. Therefore, $|{\rm Max}(R)|<\infty$ and hence by \cite[Theorem 3.1]{ye}, $\omega(\Gamma(R))<\infty$.  Now, the result follows from Ramsey's Theorem, see \cite[Theorem 12.5]{bondy}.
}
\end{proof}

%By \cite[Theorem 4.5]{ye}, $|{\rm Max}(R)|=2$. Let ${\rm Max}(R)=\{\mathfrak{m}_1, \mathfrak{m}_2\}$. With no loss of generality, one may assume that $\mathfrak{m}_1$ is that vertex of star which is adjacent to all other vertices. We claim that $\mathfrak{m}_1$ is principal. Assume that $a\in \mathfrak{m}_1\setminus \mathfrak{m}_2$. Clearly, $Ra+\mathfrak{m}_2=R$. Since $deg(\mathfrak{m}_2)=1$, we deduce that $\mathfrak{m}_1=Ra$ and so the claim is proved. Since $\mathfrak{m}_1$ is adjacent to all other vertices , we have $\mathfrak m_1^2=\mathfrak m_1$ and so $Ra=Ra^2$. Thus $a=ta^2$, for some $t\in R$. Clearly, $1-ta$ is  a non-zero and non-unit idempotent. By \cite[Proposition 5.10]{ander}, $R\cong R_1\times R_2$, for some rings $R_1$ and $R_2$. We show that at least one of rings  $R_1$ and $R_2$ is a field. By contrary, assume that $I_i$ is an ideal of $R_i$, for $i=1,2$. Now, consider the 4-cycle $R_1\times 0-\hspace{-.2cm}- I_1\times R_2-\hspace{-.2cm}-R_1\times I_2-\hspace{-.2cm}-0\times R_2-\hspace{-.2cm}-R_1\times 0$, a contradiction. Thus Suppose that $R_1$ is a field. Since $\Gamma(R)$ is an infinite star graph, $R_2$ is not field. Also, the equality $|{\rm Max}(R)|=2$ implies that $R_2$ is a local ring.
\begin{thm}\label{finit}
If each vertex of $\Gamma(R)$ has a finite degree, then $R$ has finitely many ideals. Moreover, $R$ is a direct product of finitely many uniserial rings and a finite ring.
\end{thm}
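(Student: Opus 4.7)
The plan proceeds in three stages.

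\textbf{Stage I (semilocality and finiteness of $\Gamma(R)$).} Any two distinct maximal ideals are coprime, hence adjacent in $\Gamma(R)$. Provided $R$ is not itself local, every maximal ideal $\mathfrak{m}$ is a vertex of $\Gamma(R)$: otherwise $\mathfrak{m}\subseteq {\rm J}(R)$ would force $\mathfrak{m}={\rm J}(R)\subseteq\mathfrak{m}'$ for every other maximal $\mathfrak{m}'$, contradicting maximality. Thus $|{\rm Max}(R)|\le\deg(\mathfrak{m})+1<\infty$. Writing ${\rm Max}(R)=\{\mathfrak{m}_1,\ldots,\mathfrak{m}_n\}$, every vertex $I$ of $\Gamma(R)$ fails to lie in ${\rm J}(R)=\bigcap_{i}\mathfrak{m}_i$, so $I+\mathfrak{m}_i=R$ for some $i$; hence $V(\Gamma(R))\subseteq\bigcup_{i=1}^{n}\bigl(\{\mathfrak{m}_i\}\cup N(\mathfrak{m}_i)\bigr)$ is finite.

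\textbf{Stage II (decomposition into local rings).} For each $i$ the chain $\mathfrak{m}_i\supseteq\mathfrak{m}_i^2\supseteq\mathfrak{m}_i^3\supseteq\cdots$ lies entirely in $V(\Gamma(R))$, because $\mathfrak{m}_i^t\not\subseteq\mathfrak{m}_j$ for $j\ne i$ by primality of $\mathfrak{m}_j$; by Stage I it therefore stabilizes at some common $k$ with $\mathfrak{m}_i^k=\mathfrak{m}_i^{k+1}$. Pairwise coprimality upgrades to $\mathfrak{m}_i^k+\mathfrak{m}_j^k=R$ for $i\ne j$, so the Chinese Remainder Theorem gives $R/{\rm J}(R)^k\cong\prod_{i=1}^{n}R/\mathfrak{m}_i^k$, a product of $n$ local rings, while ${\rm J}(R)^k$ is an idempotent ideal of $R$. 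Leveraging the finite-degree hypothesis to produce a finite generating set for ${\rm J}(R)^k$ and invoking the classical lemma that a finitely generated idempotent ideal in a commutative ring is generated by an idempotent, I obtain $e\in R$ with ${\rm J}(R)^k=(e)$; lifting the CRT decomposition across the resulting splitting then yields orthogonal idempotents $e_1,\ldots,e_n\in R$ summing to $1$ with each $Re_i$ local, so $R\cong R_1\times\cdots\times R_n$ with each $R_i$ local.

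\textbf{Stage III (local factors).} For each $i$ and any proper $A\subsetneq R_i$, the ideal $R_1\times\cdots\times R_{i-1}\times A\times R_{i+1}\times\cdots\times R_n$ is a vertex of $\Gamma(R)$ whose neighborhood is in natural bijection with the proper ideals of $\prod_{j\ne i}R_j$; finite degree then forces each $R_j$ to have only finitely many ideals. Hence $R$ has finitely many ideals and each $R_i$ is Artinian local. Writing $k_i=R_i/\mathfrak{m}_i$, the standard characterization says $R_i$ is uniserial iff $\dim_{k_i}(\mathfrak{m}_i/\mathfrak{m}_i^2)\le 1$; in the contrary case, the subspace lattice of $\mathfrak{m}_i/\mathfrak{m}_i^2$ over $k_i$ embeds into the ideal lattice of $R_i$, so finiteness of the latter forces $k_i$ to be finite, and a composition-series argument with $\mathfrak{m}_i^{k}=0$ makes $R_i$ itself finite. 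Grouping the non-uniserial (finite) local factors into a single direct product produces the claimed decomposition of $R$ as a product of finitely many uniserial rings with a finite ring.

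\textbf{Main obstacle.} The crux is the idempotent extraction in Stage II: an idempotent ideal need not be generated by an idempotent without the finite-generation hypothesis, so the real work is to use the finiteness of $V(\Gamma(R))$ --- i.e.\ that $R$ has only finitely many ideals outside ${\rm J}(R)$ --- to show that ${\rm J}(R)^k$ is finitely generated, whereupon the idempotent-ideal lemma applies and the product decomposition lifts from $R/{\rm J}(R)^k$ back to $R$ itself.
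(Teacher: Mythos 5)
There is a genuine gap, and you have located it yourself: Stage II never actually produces an idempotent \emph{element}. Stabilizing the chain $\mathfrak{m}_i\supseteq\mathfrak{m}_i^2\supseteq\cdots$ only yields an idempotent \emph{ideal} ${\rm J}(R)^k$, and your proposed bridge --- show ${\rm J}(R)^k$ is finitely generated and then invoke the lemma that a finitely generated idempotent ideal is generated by an idempotent --- is asserted, not proved. Nothing in Stage I helps here: the finiteness you established concerns only ideals \emph{not} contained in ${\rm J}(R)$, whereas ${\rm J}(R)^k$ and all of its subideals live inside ${\rm J}(R)$, where you have no control whatsoever; at this point in the argument the ring could, for all you know, still have infinitely many ideals inside the radical. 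I do not see how to obtain finite generation of ${\rm J}(R)^k$ from your hypotheses without essentially proving the theorem first, so the central step of the proof is missing.

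The way around this --- and it is the one essential idea in the paper's proof --- is to run the stabilization argument on a \emph{principal} ideal, so that stabilization hands you an idempotent element directly. For a vertex $I$ choose $L$ with $I+L=R$ and write $a+b=1$ with $a\in I$, $b\in L$; expanding $(a+b)^n$ gives $I+Rb^n=R$ for every $n$, and each $Rb^n$ is a proper ideal not contained in ${\rm J}(R)$ (otherwise $R=I+Rb^n\subseteq I+{\rm J}(R)\subseteq\mathfrak{m}$ for a maximal ideal $\mathfrak{m}\supseteq I$), hence a neighbour of $I$. Finiteness of $\deg I$ forces $Rb^t=Rb^{2t}$, so $b^t=lb^{2t}$ and $lb^t$ is a nontrivial idempotent, giving $R\cong R_1\times R_2$ at once; one could equally apply this with $a\in\mathfrak{m}_i$ chosen outside the other maximal ideals to split off local factors in your framework. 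After this the paper needs none of your CRT machinery: if $R_1$ had infinitely many proper ideals $I_j$, the vertex $R_1\times 0$ would be adjacent to every $I_j\times R_2$, contradicting finite degree, so $R$ has finitely many ideals, and the uniserial-plus-finite decomposition is then quoted from Hirano. Your Stage I is a correct and rather clean proof that $\Gamma(R)$ is finite, and your Stage III is in effect a self-contained proof of the case of Hirano's theorem that the paper merely cites; but without a proof that an idempotent element (not just an idempotent ideal) exists, the argument does not go through.
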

\begin{proof}{
Let $I$ be a vertex of $\Gamma(R)$. Then there exists an ideal $L$ of $R$ such that $I+L=R$. So there exist two elements $a\in I$ and $b\in L$ such that $a+b=1$. Since $1=(a+b)^n=\Sigma_{k=0}^n {n\choose k}a^kb^{n-k}$, we conclude that $I+Rb^n=R$, for $n=1,2,\ldots$. Furthermore since  $I$ has finite degree and $Rb^n\nsubseteq {\rm J}(R)$, for $n=1,2,\ldots$, we find that  $(b^t)=(b^{2t})$, for some $t\geq 1$. Therefore $b^t=lb^{2t}$, for some $l\in R$, and so $1-lb^t$ is a non-trivial idempotent element. Hence $R\cong R_1\times R_2$, for some rings $R_1$ and $R_2$, see \cite[Proposition 5.10]{ander}. We show that $R_i$ contains finitely many ideals for $i=1,2$. If $\{I_i\}_{i=1}^{\infty}$ is an infinite family of ideals of $R_1$, then the vertex $J=R_1\times 0$ is adjacent to $I_i\times R_2$, for $i\geq 1$. So the degree of $J$ is not finite, a contradiction. By a similar argument $R_2$ has finitely many ideals. Thus $R$ contains finitely many ideals. It follows from \cite[Theorem 2.4]{yas}, $R$ is a direct product of finitely many uniserial rings and a finite ring.
}\end{proof}
\noindent To prove the next result, we need a celebrated theorem due to Kuratowski.
\begin{thm} {\rm \cite [Theorem 10.30]{bondy}}
A graph is planar if and only if it contains no subdivision of either $K_5$ or $K_{3,3}$.
\end{thm}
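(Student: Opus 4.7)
The plan is to prove the two directions separately, treating the forward implication as a short computation with Euler's formula and devoting essentially all the work to the converse.

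For the forward direction (existence of a $K_5$- or $K_{3,3}$-subdivision forces non-planarity), I would first verify that $K_5$ and $K_{3,3}$ are themselves non-planar. Apply Euler's formula $V - E + F = 2$ to a hypothetical planar embedding: since each face of a simple plane graph is bounded by at least three edges and each edge lies on at most two faces, one obtains $E \le 3V - 6$, which $K_5$ (with $V = 5,\ E = 10$) violates. For $K_{3,3}$ the girth is $4$, so the same double-counting upgrades to $E \le 2V - 4$, which $(V,E) = (6,9)$ violates. Because subdividing an edge does not change planarity (a subdivision and its base graph are homeomorphic as topological spaces), any graph containing a subdivision of $K_5$ or $K_{3,3}$ is non-planar.

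For the converse, I would proceed by contradiction. Let $G$ be a minimum counter-example, minimizing first the number of vertices and then the number of edges. The strategy is to progressively pin down the structure of $G$ until one is forced to produce the desired subdivision. First, reduce to the case that $G$ is connected by applying the induction hypothesis to each component. Next, show that $G$ has no cut vertex: planar embeddings of the blocks of $G$, obtained by minimality, can be glued at any cut vertex. Then promote to $3$-connectivity by splitting $G$ along a $2$-separator $\{u,v\}$, adding the edge $uv$ to each side, applying minimality to the two smaller graphs, and reassembling; either a planar embedding of $G$ results, or one of the augmented pieces contains a Kuratowski subdivision that transfers back to $G$.

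Once $G$ is $3$-connected, I would invoke Thomassen's contraction argument. By a lemma of Tutte, every $3$-connected graph on at least five vertices has an edge $e = xy$ whose contraction leaves the graph $3$-connected. By minimality, $G/e$ is planar, and by Tutte's theorem one may fix a convex planar embedding of it. The aim is to split the contracted vertex back into $x$ and $y$ so as to obtain a planar embedding of $G$: the neighbors of $x$ and of $y$ appear on the boundary of the face of $(G/e) \setminus \{xy\}$ that contained the merged vertex, and if their cyclic patterns can be separated, a planar embedding of $G$ drops out, contradicting non-planarity. Otherwise the interleaving must be analyzed, and in each obstruction one exhibits an explicit $K_5$ or $K_{3,3}$ subdivision. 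This final case analysis is the main obstacle and the combinatorial core of the theorem: one needs to enumerate the few possible bad interleavings of the two neighborhoods around the face and, by tracing paths through $G$, produce the topological minor by hand in each configuration.
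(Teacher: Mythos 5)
This statement is not proved in the paper at all: it is Kuratowski's theorem, imported verbatim from Bondy and Murty (Theorem~10.30 there) solely so that it can be applied in the planarity classification. So there is no ``paper's proof'' to compare against; what you have written is a proof plan for a cited textbook theorem.

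Your plan follows the standard Thomassen route, and every step you name is the right one: Euler's formula giving $E\le 3V-6$ (and $E\le 2V-4$ for triangle-free graphs) to kill $K_5$ and $K_{3,3}$; invariance of planarity under subdivision; a minimum counterexample reduced to the $3$-connected case via blocks and $2$-separators; Tutte's contractible-edge lemma; and the re-expansion of the contracted vertex inside a convex embedding of $G/e$. Two points keep this from being a proof rather than a plan. First, and decisively, the entire content of the hard direction lives in the final case analysis --- enumerating the ways the neighbourhoods of $x$ and $y$ can interleave on the boundary cycle of the relevant face and exhibiting a $K_5$- or $K_{3,3}$-subdivision in each bad configuration --- and you explicitly defer it. Until those cases are written out, the converse direction is not established. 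Second, the $2$-separator reduction needs one more idea than you state: when an augmented piece $G_i+uv$ contains a Kuratowski subdivision using the added edge $uv$, you must replace that edge by a $u$--$v$ path running through the \emph{other} piece (such a path exists since $G$ is $2$-connected at that stage) to obtain a subdivision inside $G$ itself; without this substitution the transfer ``back to $G$'' does not go through. With those two items filled in, your outline becomes the standard complete proof.
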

\noindent We close this paper with the following theorem.
\begin{thm}\label{ind}
Let $\Gamma(R)$ be a finite graph. If $\Gamma(R)$ is planar, then one of the following holds:\\

\noindent {\rm(i)} $R\cong R_1\times R_2$, where $R_1$ and $R_2$ are local rings and one of $R_i$ has at most three ideals.\\
\noindent {\rm(ii)} $R\cong R_1\times R_2\times R_3$ and each $R_i$ has at most one non-trivial ideal.
\end{thm}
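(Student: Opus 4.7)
Since $\Gamma(R)$ is finite, every vertex has finite degree, so Theorem~\ref{finit} yields $R\cong R_1\times\cdots\times R_n$ with each $R_i$ a local Artinian ring. By \cite[Theorem 3.1]{ye}, $\omega(\Gamma(R))=|{\rm Max}(R)|=n$, so planarity forces $n\le 4$, and we may assume $n\ge 2$ (a local ring gives an empty $\Gamma(R)$).

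For the necessity direction, when $n=4$ the six vertices
\[
M_1,\ M_2,\ 0\times 0\times R_3\times R_4\quad\text{and}\quad M_3,\ M_4,\ R_1\times R_2\times 0\times 0
\]
yield a $K_{3,3}$ subgraph: a direct componentwise check shows every cross pair is comaximal, so $\Gamma(R)$ is non-planar. When $n=3$ and some $R_i$ (say $R_1$) has at least two non-trivial ideals, fix three distinct proper ideals $J_1,J_2,J_3$ of $R_1$; the bipartition
\[
\{(R_1,0,0),\,(R_1,R_2,0),\,(R_1,0,R_3)\}\ \text{vs.}\ \{(J_1,R_2,R_3),(J_2,R_2,R_3),(J_3,R_2,R_3)\}
\]
again produces all nine cross edges (each pair sums to $R$), hence a $K_{3,3}$ subgraph. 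So planarity at $n=3$ forces each $R_i$ to have at most one non-trivial ideal, giving (ii). When $n=2$, every vertex has exactly one coordinate equal to the full ring (else it is either improper or contained in ${\rm J}(R)$), two vertices agreeing on which coordinate is full are non-adjacent (the sum lies in the local ring on the other side), and two vertices with opposite "full" coordinates are always comaximal; hence $\Gamma(R)\cong K_{a,b}$ with $a,b$ the numbers of proper ideals of $R_2,R_1$. This is planar iff $\min(a,b)\le 2$, i.e.\ iff one of the $R_i$ has at most three ideals, giving (i).

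For sufficiency, (i) is immediate from the $K_{a,b}$ identification above. For (ii) the approach is to parametrise each vertex by its "full-coordinate" set $S\subsetneq\{1,2,3\}$ together with a choice of proper ideal in each non-full coordinate (at most two options per coordinate, because each $R_i$ has at most three ideals), observe that adjacency depends only on whether $S(I)\cup S(J)=\{1,2,3\}$, and then exhibit an explicit planar embedding in each of the permitted configurations of the three local factors (all fields; one, two, or three factors each carrying a single non-trivial ideal).

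The structural reduction and the two $K_{3,3}$ constructions for non-planarity are the clean parts. The main obstacle is the case-by-case planarity check in~(ii): the graph can grow to roughly two dozen vertices, and one must produce concrete planar drawings (or Euler-formula/Kuratowski verifications) for each sub-configuration, which is where the bulk of the bookkeeping lives.
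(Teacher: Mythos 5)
Your necessity direction is essentially the paper's argument and is correct: the reduction via Theorem~\ref{finit} to a product of local Artinian rings, the bound $n\le 4$, the two $K_{3,3}$ constructions for $n=4$ and for $n=3$ with a factor having two non-trivial ideals, and the identification $\Gamma(R)\cong K_{a,b}$ when $n=2$ (which the paper delegates to \cite[Theorem 4.5]{ye} but you verify directly) are all sound, as is the sufficiency of (i).

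The genuine gap is exactly where you flagged it: the sufficiency of (ii) is only a plan, and in fact that plan cannot be carried out, because (ii) is \emph{not} sufficient for planarity. Take $R=R_1\times R_2\times R_3$ where $R_1$ and $R_2$ each have a unique non-trivial ideal $\mathfrak{m}_1,\mathfrak{m}_2\neq 0$ (e.g.\ $R=\mathbb{Z}_4\times\mathbb{Z}_4\times\mathbb{Z}_2$), and consider the seven vertices
$h=R_1\times R_2\times 0$, $b_1=0\times R_2\times R_3$, $b_1'=\mathfrak{m}_1\times R_2\times R_3$, $b_2=R_1\times 0\times R_3$, $b_2'=R_1\times\mathfrak{m}_2\times R_3$, $a_1=R_1\times 0\times 0$, $a_2=0\times R_2\times 0$.
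A componentwise check (using that $I+J=R$ iff in every coordinate one of the two entries is the full ring) shows that $h,b_1,b_1',b_2,b_2'$ are pairwise adjacent except for the pairs $\{b_1,b_1'\}$ and $\{b_2,b_2'\}$, which are joined by the internally disjoint paths $b_1$--$a_1$--$b_1'$ and $b_2$--$a_2$--$b_2'$. This is a subdivision of $K_5$, so $\Gamma(R)$ is non-planar even though $R$ satisfies (ii). (When all three factors have a non-trivial ideal one similarly finds the octahedron $K_{2,2,2}$ on the six vertices of the form $I_i$ in one slot and full rings elsewhere, plus a degree-two vertex subdividing an ``antipodal'' pair, which again yields a $K_5$-subdivision.) So your instinct that the converse of (ii) is ``where the bulk of the bookkeeping lives'' was right, but the bookkeeping reveals that the statement fails: planarity for $n=3$ actually forces at most one of the $R_i$ to be a non-field (with at most three ideals), not merely each $R_i$ to have at most one non-trivial ideal. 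The paper's own proof disposes of this direction with ``the converse is easy to verify,'' which is where the error lies; your write-up should either correct the statement of (ii) or supply the counterexample above.
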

\begin{proof}{Assume that $\Gamma(R)$ is planar. Since $\Gamma(R)$ is finite, it follows from Theorem \ref{finit} that $R$ is an Artinian ring. By \cite[Theorem 8.7]{ati}, $R\cong R_1\times \cdots \times R_n$, where $R_i$ is an Artinian local ring, for $i=1,\ldots,n$. Now, Kuratowski's Theorem implies that $|{\rm Max}(R)|\leq 4$. Assume that $|{\rm Max}(R)|=4$ and ${\rm Max}(R)=\{\mathfrak{m}_1,\mathfrak{m}_2,\mathfrak{m}_3,\mathfrak{m}_4\}$. It is not hard to see that $V_1=\{\mathfrak{m}_1,\mathfrak{m}_2,\mathfrak{m}_1\mathfrak{m}_2\}$ and $V_2=\{\mathfrak{m}_3,\mathfrak{m}_4,\mathfrak{m}_3\mathfrak{m}_4\}$ induce $K_{3,3}$. Thus $|{\rm Max}(R)|\leq 3$. If $|{\rm Max}(R)|=2$, then (i) is directly follows from Kuratowski's Theorem and \cite[Theorem 4.5]{ye}.
Hence suppose that $|{\rm Max}(R)|= 3$ and so $R\cong R_1\times R_2\times R_3$. With no loss of generality, assume that $R_1$ has at least two non-trivial ideals $I$ and $J$. Thus two sets $V_1=\{I\times R_2\times R_3, J\times R_2\times R_3, 0\times R_2\times R_3 \}$ and $V_2=\{R_1\times 0 \times R_3, R_1\times R_2\times 0, R_1\times 0\times 0 \}$ imply that $\Gamma(R)$ contains $K_{3,3}$, a contradiction. Therefore, each $R_i$ has at most one non-trivial ideal. This completes the proof.
}
\end{proof}
%\vspace{9mm} \noindent{\bf\large 2. Planarity of $\Gamma(R)$}\\
%
%\noindent In this section, we study the planarity of $\Gamma(R)$. First, we need the Kuratowski's Theorem, see the next lemma.
%
%\begin{lem} {\rm \cite [Theorem 10.30]{bondy}}
%A graph is planar if and only if it contains no subdivision of either $K_5$ or $K_{3,3}$.
%\end{lem}
%\begin{thm}\label{ind}
%If  $\Gamma(R)$ is planar, then  $|{\rm Max}(R)|\leq 3$. Moreover, \\
%{\rm(i)} The graph $\Gamma(R)$ is planar and $|{\rm Max}(R)|=2$ if and only if $\Gamma(R)=K_{m,n}$, where either $m\leq 2$ or $n\leq 2$.\\
% {\rm(ii)} If $|{\rm Max}(R)|=3$, then $\Gamma(R)$ is planar if and only if .
%\end{thm}
%\begin{proof}
%{It follows from Kuratowski's Theorem that $|{\rm Max}(R)|\leq 4$. Assume that $|{\rm Max}(R)|=4$ and ${\rm Max}(R)=\{\mathfrak{m}_1,\mathfrak{m}_2,\mathfrak{m}_3,\mathfrak{m}_4\}$. It is not hard to see that $V_1=\{\mathfrak{m}_1,\mathfrak{m}_2,\mathfrak{m}_1\mathfrak{m}_2\}$ and $V_2=\{\mathfrak{m}_3,\mathfrak{m}_4,\mathfrak{m}_3\mathfrak{m}_4\}$ induce $K_{3,3}$. Moreover,\\
%{\rm(i)} is directly follows from Kuratowski's Theorem and \cite[Theorem 4.5]{ye}.
%}
%\end{proof}
\noindent{\bf Acknowledgements.}  The research
of the first and the third authors were in part supported by a grant from the IPM No.
92050212, and No. 92050017, respectively.

%%%%%%%%%%%%%%%%%%%%%%%%%%%%%%%%%%%%%%%%%%%%%%%%%%%%%%%%%%%%%%%%%%%%%%%%%%%%%%%%%%%%%%%%%%%%%%%%%%%%%%%%
%%%%%%%%%%%%%%%%%%%%%%%%%%%%%%%%%%%%%%%%%%%%%%%%%%%%%%%%%%%%%%%%%%%%%%%%%%%%%%%%%%%%%%%%%%%%%%%%%%%%%%%%

{}

\end{document}